\title{A structure theorem for unions of complete intersections} 
\author{Alfio Ragusa
	\and  Giuseppe Zappal\`a}
\subjclass[2000]{13 D 40, 13 H 10}
\keywords{Almost complete intersections, Gorenstein rings, pfaffians, Betti numbers}
\DeclareSymbolFont{rsfscript}{OMS}{rsfs}{m}{n}
\DeclareSymbolFontAlphabet{\mathrsfs}{rsfscript}
\DeclareSymbolFont{AMSb}{U}{msb}{m}{n}
\DeclareSymbolFontAlphabet{\mathbb}{AMSb}
\DeclareSymbolFont{eufrak}{U}{euf}{m}{n}
\DeclareSymbolFontAlphabet{\gothic}{eufrak}
\def\ac{\`}
\newcommand\sgn{\operatorname{sgn}}
\newcommand{\f}{\footnotesize}
\newcommand{\pp}{\mathbb P}
\newcommand\hgt{\operatorname{ht}}
\newcommand\pf{\operatorname{pf}}
\newcommand\Pf{\operatorname{Pf}}
\newcommand\rw{\Rightarrow}
\newcommand\zz{{\mathbb Z}}
\newtheorem{thm}{Theorem}[section]
\newtheorem{lem}[thm]{Lemma}
\newtheorem{prp}[thm]{Proposition}
\newtheorem{cor}[thm]{Corollary}
\theoremstyle{definition}
\theoremstyle{remark}
\newtheorem{rem}[thm]{Remark}
\newtheorem{exm}[thm]{Example}
\begin{document}


\begin{abstract} Using the connections among almost complete intersection schemes, arithmetically Gorenstein schemes and schemes that are union of complete intersections we give a structure theorem for arithmetically Cohen-Macaulay union of two complete intersections of codimension $2.$ We apply the results for computing Hilbert functions and graded Betti numbers for such schemes.

\end{abstract}

\maketitle

\section*{Introduction}
\markboth{\it Introduction}{\it Introduction}
The simplest projective schemes which one can study are those whose defining ideals can be generated by the minimal number of equations with respect to their codimension $c.$ These schemes are the complete intersection schemes whose defining ideals are minimally generated by exactly $c$ elements.
In particular, this implies that they can be generated by a regular sequence. Now, there are different ways to generalize such a notion. For instance, one can ask to study arithmetically Cohen Macaulay schemes which can be generated by a number of elements which is equal to one more than the codimension. These kinds of schemes are usually denominated almost complete intersections and they are recently studied, for instance, in \cite{MMN}, \cite{HK}, \cite{MM}, \cite{RZ3} and \cite{Se}. Another possibility is to study schemes which, as the complete intersections,  have Cohen-Macaulay type $1$ or equivalently with principal last syzygies module. In this case we have arithmetically Gorenstein schemes and we have a large literature on this theme (see, for instance, \cite{BE}, \cite{Di}, \cite{IK}, \cite{RZ1}, \cite{RZ2}). Finally, from a more geometric point of view, one can ask to study schemes which are a finite union of complete intersections with some kind of generic property for realizing such unions. Well, all these kinds of generalizations are strictly related, as we show in this paper in the codimension two case. Indeed, using information on an almost complete intersection and on a Gorenstein scheme directly linked to it we obtain nice information on a union of two complete intersection schemes. The idea is very simple: starting from a union of two complete intersections $X_1$ and $X_2$ of codimension $2$ in $\pp^r$ which realizes an arithmetically Cohen-Macaulay scheme $X$ (this, in particular, implies that $X_1\cap X_2$ is aCM of codimension $3$), using their defining ideals $I_{X_{1}},$ $I_{X_{2}}$ in the polynomial ring $R=k[x_0, \ldots, x_r]$, one performs the almost complete intersection $I_{X_{1}}+I_{X_{2}}$  of codimension $3$ which, when it is linked in a complete intersection generated by three of its generators, produces a Gorenstein scheme $G$ of codimension $3.$ Now, we use the pfaffian resolution of this Gorenstein scheme to obtain a free resolution of the given union $I_X$ getting in this way a structure theorem for such schemes (Theorem \ref{matr}). As applications of this result we obtain a description of the Hilbert functions of these schemes, in particular, the starting degree and the Castelnuovo-Mumford regularity for $I_X$, and a lot of information about their graded Betti numbers (indeed, all up to very few cancellations). In many cases our resolutions are minimal, so in these cases we get the Hilbert-Burch matrix of the defining ideal of these schemes.

\section{Union of complete intersections of codimension two} 
\markboth{\it Union of complete intersections of codimension two}
{\it Union of complete intersections of codimension two}
Let $k$ be an algebraically closed field and let $R:=k[x_0,\ldots,x_r].$ We consider on $R$ the standard grading and we consider in it just homogeneous ideals.\par 
An ideal $I_Q\subset R$ is said to be an {\em almost complete intersection} ideal of codimension $c$ if $I_Q$ is perfect and it is minimally generated by less or equal to $c+1$ forms (note that we include complete intersections in this definition).\par
Every almost complete intersection ideal $I_Q$ of codimension $c$ is directly linked in a complete intersection to a Gorenstein ideal $I_G\subset R.$ Indeed, if $I_Z\subseteq I_Q$ is generated by $c$ minimal generators of $I_Q,$ which form a regular sequence, then $I_G:=I_Z:I_Q$ is a Gorenstein ideal. By liaison theory (see \cite{PS} for a complete discussion on this argument) we have also $I_Q=I_Z:I_G.$ \par
Gorenstein and almost complete intersection ideals in codimension $3$ were extensively studied. In particular, it is well known that the Gorenstein ideals of codimension three are generated by the $(n-1)$-pfaffians of an alternating matrix of odd size $n$ (see \cite{BE}). If $A$ is an alternating matrix we will denote by $\pf A$ its pfaffian and by $\Pf_r(A)$ the ideal generated by the $r$-pfaffians of $A.$
\par
Let $X_1,X_2\subset\pp^r,$ $r\ge 2$ be two complete intersection schemes of codimension $2$ without common components. Assume that $X_1\cup X_2$ is aCM. 
For instance, note that this always happens for a disjoint union of two $0$-dimensional complete intersection schemes of $\pp^2.$ 

\begin{rem}
By the standard exact sequence
\begin{equation}\label{vietoris}
 0\to I_{X_1}\cap I_{X_2}\to I_{X_1}\oplus I_{X_2}\to I_{X_1}+I_{X_2}\to 0
\end{equation}
we see that the homological dimension of $R/(I_{X_1}+ I_{X_2})$ is less than or equal to $3$ (by mapping cone), hence, since by assumption, $X_1,X_2$ have no common components, it is exactly $3.$ Consequently, the ideal $ I_{X_1}+I_{X_2}$ is Artinian for $r=2$ and it is a saturated ideal for $r\ge 3;$ precisely, $ I_{X_1}+I_{X_2}=I_{X_1\cap X_2},$ therefore $X_1\cap X_2$ is aCM of codimension $3.$
\end{rem}

Now we would like to give a structure theorem for schemes of the type $X_1\cup X_2.$

We start with collecting some well known facts about pfaffians and determinants of skew-symmetric matrices which will be useful for proving our results.
\par
We introduce the following notation. If $M$ is a matrix we will denote by $M_{[i_1,\ldots,i_r;j_1,\ldots,j_s]}$ the submatrix of $M$ obtained by deleting the rows labelled by
the integers $i_1,\ldots,i_r$ and the columns labelled by the integers $j_1,\ldots,j_s.$ With $M_{[i_1,\ldots,i_r;-]}$ we will denote the submatrix of $M$ obtained by deleting just the rows, and analogously for $M_{[-;j_1,\ldots,j_s]}.$ Moreover $M_{[i_1,\ldots,i_r;i_1,\ldots,i_r]}$ will be denoted by $M_{(i_1,\ldots,i_r)}.$

In the sequel if $a_1<\ldots<a_n$ are integers and $(b_1,\ldots,b_n)$ is a permutation of them, we will use $\sgn(b_1,\ldots,b_n)$ the sign of the permutation $(b_1,\ldots,b_n)$ with respect to $(a_1,\ldots,a_n).$

\begin{lem}\label{detpf}
Let $A$ be an alternating matrix of odd size $n.$ Then
\begin{itemize}
	\item[1)] $\det A_{[i;j]}=\pf A_{(i)}\pf A_{(j)}.$
  \item[2)] $\det A_{[i,j;h,k]}=\sgn(i,j)\sgn(h,k)\left(\sgn(i,j,h)\pf A_{(i,j,h)}\pf A_{(k)}-\right.$ \\
  $\left.\sgn(i,j,k)\pf A_{(i,j,k)}\pf A_{(h)}\right)=$\\
  $\sgn(h,k)\sgn(i,j)\left(\sgn(h,k,j)\pf A_{(h,k,j)}\pf A_{(i)}-\right.$ \\
  $\left.\sgn(h,k,i)\pf A_{(h,k,i)}\pf A_{(j)}\right).$
\end{itemize}
\end{lem}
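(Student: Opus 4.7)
Both identities are classical consequences of Cayley's theorem that $\det B = (\pf B)^2$ for an alternating matrix $B$ of even size, together with the pfaffian expansion of an odd-size alternating matrix. The plan is to prove (1) via the adjugate matrix and then deduce (2) by a Laplace expansion reducing to (1) applied to a submatrix.

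For part (1), the case $i=j$ is immediate: $A_{(i)}$ is alternating of even size $n-1$, so Cayley gives $\det A_{(i)}=(\pf A_{(i)})^2$. For $i\neq j$, since $n$ is odd we have $\det A=0$, so the classical adjugate relation $A\cdot\operatorname{adj}(A)=0$ forces every column of $\operatorname{adj}(A)$ to lie in $\ker A$. A direct computation with the pfaffian expansion formula shows that the vector $v$ with $v_m=(-1)^{m-1}\pf A_{(m)}$ satisfies $Av=0$, and in the generic case spans $\ker A$. Since $\operatorname{adj}(A)$ is symmetric (because $A$ is skew of odd size) and has rank at most one, it must equal $\lambda\,vv^{T}$ for some scalar $\lambda$, and the diagonal case just treated forces $\lambda=1$. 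Reading off the $(i,j)$-entry of $\operatorname{adj}(A)$ then yields $\det A_{[i;j]}=\pf A_{(i)}\pf A_{(j)}$. The identity is polynomial in the entries of $A$, so it extends by specialization from the generic case to every alternating $A$.

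For part (2), I would apply (1) to the alternating odd-size matrix $A_{(i,j)}$ of size $n-2$. After permuting rows to bring $i,j$ into the last two positions (contributing the factor $\sgn(i,j)$) and columns to bring $h,k$ into the last two positions (contributing $\sgn(h,k)$), I would perform a $2\times 2$ Laplace expansion of $\det A_{[i,j;h,k]}$ along the now-last two columns of the top $(n-2)\times n$ block. Each $(n-4)\times(n-4)$ complementary minor has the form $\det(A_{(i,j)})_{[p;q]}$, which by (1) applied to $A_{(i,j)}$ factors as a product of two pfaffians of submatrices $A_{(i,j,\cdot)}$. Collecting like terms and recognising in each group the pfaffian expansion $\pf A_{(h)}=\sum_{p}\pm a_{hp}\pf A_{(h,p)}$, and similarly for $k$, assembles the first equality. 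The second equality follows either by symmetry, swapping the roles of $\{i,j\}$ and $\{h,k\}$ and using $A^{T}=-A$ (so that determinants of square submatrices of odd complementary parity are invariant), or by performing the same expansion along the other pair of removed indices.

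The main obstacle will be the sign bookkeeping in (2): ensuring that the row and column transpositions used to bring the deleted indices into standard position, combined with the signs inside the various pfaffian expansions, reassemble exactly into the factors $\sgn(i,j)\sgn(h,k)\sgn(i,j,h)$ and $\sgn(i,j)\sgn(h,k)\sgn(i,j,k)$ prescribed by the statement. The algebraic content is routine, but the sign accounting, governed by the paper's convention for $\sgn(\cdots)$ on ordered versus rearranged tuples, is where care is needed.
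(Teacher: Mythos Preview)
The paper does not prove this lemma at all: its entire proof reads ``The first result is due to Cayley (see \cite{Cay}). The second one is a rewriting of a generalization due to Heymans (see \cite{Hey}, formula (3.31)).'' So your proposal is not being compared against an argument but against a pair of citations.

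Your sketch for part (1) via the adjugate is correct and is one of the standard modern proofs of Cayley's identity: for $n$ odd one has $\operatorname{adj}(A)^{T}=\operatorname{adj}(-A)=(-1)^{n-1}\operatorname{adj}(A)=\operatorname{adj}(A)$, so $\operatorname{adj}(A)$ is symmetric of rank $\le 1$; the pfaffian Laplace expansion exhibits $v=((-1)^{m-1}\pf A_{(m)})_m$ in $\ker A$, and matching diagonal entries gives $\operatorname{adj}(A)=vv^{T}$, whence the off-diagonal identity. This is more informative than the paper's bare citation.

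For part (2) your strategy is reasonable but the write-up is imprecise. The matrix $A_{[i,j;h,k]}$ is already $(n-2)\times(n-2)$, so the phrase ``the now-last two columns of the top $(n-2)\times n$ block'' does not describe a step in computing its determinant; you seem to be conflating an expansion of a larger minor with the object you want. A cleaner way to carry out your idea is to expand $\det A_{[i;h]}$ (which equals $\pf A_{(i)}\pf A_{(h)}$ by part (1)) along the row indexed by $j$, obtaining a linear relation among the $\det A_{[i,j;h,k]}$, and combine this with the pfaffian row expansion of $\pf A_{(h)}$; alternatively, apply part (1) to the odd-size alternating matrix $A_{(h,k)}$ and track how $A_{[i,j;h,k]}$ sits inside it. Either route works, and, as you anticipate, the only genuine labour is the sign bookkeeping. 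Since the paper simply defers to Heymans' formula (3.31), any self-contained derivation you give already exceeds what the paper provides.
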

\begin{proof}
The first result is due to Cayley (see \cite{Cay}). The second one is a rewriting of a generalization due to Heymans (see \cite{Hey}, formula (3.31)).
\end{proof}

\begin{lem}\label{genpf}
Let $A$ be an alternating matrix of odd size $n$ and let $p_i:=(-1)^i\pf A_{(i)}.$ Let 
$$B:=\begin{pmatrix}
 0&a_1&\ldots&a_n\\
 b_1\\
 \vdots& & \text{{\huge A}}\\
 b_n
 \end{pmatrix}.$$
Then $$\det B=\sum_{i=1}^na_ip_i\sum_{j=1}^nb_jp_j\,.$$
\end{lem}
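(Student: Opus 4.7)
The plan is to reduce $\det B$ to an expression involving the $(n-1)$-pfaffians of $A$ by performing two successive cofactor expansions and then applying Cayley's classical identity, which is exactly part (1) of Lemma~\ref{detpf}. The key observation is that after these two expansions the dependence on the $a_i$'s and on the $b_j$'s decouples, yielding the desired product of two linear forms in the $p_i$'s.

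First, I would expand $\det B$ along its first row. Since $B_{1,1}=0$, this gives
$$\det B=\sum_{j=1}^{n}(-1)^{j}a_{j}\det B_{[1;\,j+1]},$$
where each minor $B_{[1;\,j+1]}$ is an $n\times n$ matrix whose first column is $(b_1,\ldots,b_n)^{T}$ and whose remaining $n-1$ columns are exactly the columns of $A$ with the $j$-th column deleted.

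Next, I would expand each such minor along its first column, obtaining
$$\det B_{[1;\,j+1]}=\sum_{i=1}^{n}(-1)^{i+1}b_{i}\det A_{[i;\,j]}.$$
At this point Cayley's identity yields $\det A_{[i;j]}=\pf A_{(i)}\pf A_{(j)}$, so the right-hand side factors as $\pf A_{(j)}\bigl(\sum_{i}(-1)^{i+1}b_{i}\pf A_{(i)}\bigr)$. Substituting back into the first expansion, the bracket does not depend on $j$ and pulls out of the outer sum, leaving
$$\det B=\Bigl(\sum_{j=1}^{n}(-1)^{j}a_{j}\pf A_{(j)}\Bigr)\Bigl(\sum_{i=1}^{n}(-1)^{i+1}b_{i}\pf A_{(i)}\Bigr),$$
which, using the definition $p_i=(-1)^{i}\pf A_{(i)}$, is precisely the claimed product $\bigl(\sum_{i}a_{i}p_{i}\bigr)\bigl(\sum_{j}b_{j}p_{j}\bigr)$.

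The only non-trivial obstacle is bookkeeping the signs: three sources — the row expansion of $B$, the column expansion of each minor, and the factor $(-1)^{i}$ built into $p_{i}$ — must combine so that they collapse cleanly into the stated product. Cayley's formula does the real algebraic work of separating the $i$- and $j$-dependence; everything else is careful indexing.
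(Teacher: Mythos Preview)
Your argument is exactly the paper's: Laplace-expand $\det B$ along the first row and then along the first column, and apply Cayley's identity $\det A_{[i;j]}=\pf A_{(i)}\pf A_{(j)}$ from Lemma~\ref{detpf}(1).

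One caveat on the sign bookkeeping you yourself flagged as the only obstacle: your two bracketed factors carry exponents $(-1)^{j}$ and $(-1)^{i+1}$, so after substituting $p_i=(-1)^{i}\pf A_{(i)}$ the product is $-\bigl(\sum_i a_ip_i\bigr)\bigl(\sum_j b_jp_j\bigr)$, not $+\bigl(\sum_i a_ip_i\bigr)\bigl(\sum_j b_jp_j\bigr)$. The case $n=1$ already shows this: $\det\begin{pmatrix}0&a_1\\ b_1&0\end{pmatrix}=-a_1b_1$, while $(\sum a_ip_i)(\sum b_jp_j)=(-a_1)(-b_1)=a_1b_1$. This global sign is immaterial for how the lemma is used later (to identify the maximal minors generating $I_{n+1}(M)$), and the statement as printed carries the same slip, but your closing sentence glosses over the discrepancy rather than resolving it.
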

\begin{proof}
Using Lemma \ref{detpf} part 1), we know that 
 $$\det A_{[i;j]}=\pf A_{(i)}\pf A_{(j)}.$$ 
So to get the result it is enough to compute $\det B$ by using Laplace rule with respect to the first row and the first column.
\end{proof}

\begin{thm}\label{matr}
\phantom{A}
\par
1) Let $X_1,X_2\subset\pp^r,$ $r\ge 2,$ be two complete intersection schemes of codimension $2$ of type, respectively, $(d_1,e_1)$ and $(d_2,e_2),$ such that $\min\{d_1,e_1\}>\min\{d_2,e_2\},$  without common components. Assume that $X:= X_1\cup X_2$ is aCM.
Then there exists an alternating matrix $A$ of odd size $n$ with entries in $R$ and $3n$ forms $\alpha_i,$ $\beta_i,$ $\gamma_i,$ $1\le i\le n,$ such that
 $I_X$ is the ideal generated by the maximal minors of the matrix
 $$M=\begin{pmatrix}0&\beta_1&\beta_2&\ldots&\beta_{n-1}&\beta_n\\
 0&\alpha_1&\alpha_2&\ldots&\alpha_{n-1}&\alpha_n\\
 \gamma_1\\
 \gamma_2\\
 \vdots& & & \text{{\huge A}}\\
 \gamma_{n-1}\\
 \gamma_n
 \end{pmatrix}.$$
Precisely, it is possible to choose four forms $f_1,g_1,f_2,g_2,$ with $g_2$ of minimal degree among the four forms, such that $I_{X_1}=(f_1,g_1),$ $I_{X_2}=(f_2,g_2),$ $(f_1,g_1,f_2)$ is a regular sequence and we can choose $M$ in such a way that 
$f_1=\sum_{i=1}^n\alpha_ip_i,$ $g_1=\sum_{i=1}^n\beta_ip_i,$ $f_2=\sum_{i=1}^n\gamma_ip_i$ and $g_2=\pf\overline{A},$ where $p_i=(-1)^i\pf A_{(i)}$ and 
$$\overline{A}=\begin{pmatrix}0&0&0&\gamma_1&\ldots&\gamma_n\\0&0&0&\beta_1&\ldots&\beta_n\\0&0&0&\alpha_1&\ldots&\alpha_n\\
 -\gamma_1&-\beta_1&-\alpha_1\\
 \vdots&\vdots&\vdots&&\text{\huge A}\\
 -\gamma_n&-\beta_n&-\alpha_n
 \end{pmatrix}.$$
2) If $M$ is a matrix as above where $\Pf_{n-1}A$ is an ideal of height $3,$ we set $f_1:=\sum_{i=1}^n\alpha_ip_i,$ $g_1:=\sum_{i=1}^n\beta_ip_i,$ $f_2:=\sum_{i=1}^n\gamma_ip_i,$ where the $p_i$'s are as in 1). If $(f_1,g_1,f_2)$ is a regular sequence and $f_2$ and $\pf\overline{A}$ are coprime
then the ideal generated by the maximal minors of $M$ defines a scheme which is a union of two complete intersections of codimension $2.$
\end{thm}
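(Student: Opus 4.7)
My plan for part (1) is to use liaison to transfer structural information through the triple consisting of the union $X=X_1\cup X_2$, the almost complete intersection $J=I_{X_1}+I_{X_2}$, and a Gorenstein ideal $I_G$ linked to $J$. The degree hypothesis $\min\{d_1,e_1\}>\min\{d_2,e_2\}$ lets me label the generators so that $g_2$ is of smallest degree, and, since $X_1$ and $X_2$ share no components, the preceding Remark gives that $J$ has height $3$; after possibly swapping the two generators of $I_{X_2}$, I arrange $(f_1,g_1,f_2)$ to be a regular sequence. Linking $J$ through this complete intersection produces the Gorenstein ideal $I_G=(f_1,g_1,f_2):g_2$ of codimension $3$, and Buchsbaum-Eisenbud associates to it an alternating matrix $A$ of odd size $n$ with $I_G=(p_1,\ldots,p_n)$, $p_i=(-1)^i\pf A_{(i)}$. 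Since $f_1,g_1,f_2\in I_G$, I choose forms $\alpha_i,\beta_i,\gamma_i$ with $f_1=\sum\alpha_ip_i$, $g_1=\sum\beta_ip_i$, $f_2=\sum\gamma_ip_i$.

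The crux is the identity $g_2=\pf\overline A$. I would expand $\pf\overline A$ along its first row (whose only nonzero entries are $\gamma_1,\ldots,\gamma_n$ in columns $4,\ldots,n+3$) and reduce the resulting doubly-bordered sub-pfaffians using Lemmas \ref{detpf} and \ref{genpf}; after simplification, $\pf\overline A$ is seen to satisfy $\pf\overline A\cdot p_j\in (f_1,g_1,f_2)$ for every $j$, and hence $\pf\overline A\in (f_1,g_1,f_2):I_G=J$. Since by linkage $J/(f_1,g_1,f_2)$ is a rank-one module over the graded ring $R/I_G$, and since $\pf\overline A$ and $g_2$ have the same degree, the element $\pf\overline A$ is, up to a nonzero scalar absorbable into $A$, a representative of $g_2$. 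The principal obstacle is the sign bookkeeping inside the iterated pfaffian expansion, essentially the content of Lemma \ref{detpf}(2).

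With $g_2=\pf\overline A$ in hand, the maximal minors of $M$ can be evaluated. Deleting row $1$ (resp.\ row $2$) reduces $M$ to a matrix of the form $B$ of Lemma \ref{genpf}, which by that lemma has determinant $\pm f_1f_2$ (resp.\ $\pm g_1f_2$). Deleting one of the $n$ remaining rows and expanding the determinant along the first column, then Laplace-expanding each inner minor along its top two rows via Lemma \ref{detpf}(2), yields an element of $I_X=I_{X_1}\cap I_{X_2}$ in which the factor $g_2=\pf\overline A$ appears explicitly. To conclude that the ideal of maximal minors is exactly $I_X$, I would construct the minimal free resolution of $R/J$ by mapping cone from the pfaffian resolution of $R/I_G$ and the linkage $J\leftrightarrow I_G$, and feed it into the exact sequence \eqref{vietoris} together with the Koszul resolutions of $R/I_{X_1}$ and $R/I_{X_2}$; after cancellations, the resulting resolution of $R/I_X$ has last differential $M^T$, so by the Hilbert-Burch theorem the ideal of maximal minors of $M$ is exactly $I_X$.

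For part (2), the same chain of constructions runs in reverse. Setting $I_{X_1}=(f_1,g_1)$ and $I_{X_2}=(f_2,\pf\overline A)$, regularity of $(f_1,g_1,f_2)$ makes $I_{X_1}$ a complete intersection of codimension $2$, while coprimality of $f_2$ and $\pf\overline A$ makes $I_{X_2}$ one of codimension $2$. The height-$3$ hypothesis on $\Pf_{n-1}A$ activates Buchsbaum-Eisenbud in the opposite direction, yielding the Gorenstein ideal $I_G=(p_1,\ldots,p_n)$ containing $f_1,g_1,f_2$, and coprimality forces $X_1\cap X_2$ to have codimension $3$. The identifications of part (1) then exhibit the ideal of maximal minors of $M$ as $I_{X_1}\cap I_{X_2}$, so the scheme defined by this ideal is $X_1\cup X_2$.
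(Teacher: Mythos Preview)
Your setup---linking $J=I_{X_1}+I_{X_2}$ through the complete intersection $(f_1,g_1,f_2)$ to a Gorenstein ideal $I_G$, invoking Buchsbaum--Eisenbud for the alternating matrix $A$, and evaluating the maximal minors of $M$ via Lemmas \ref{genpf} and \ref{detpf}(2)---is exactly the paper's. Two places diverge. For the identification of $g_2$, the paper does not expand $\pf\overline{A}$ directly; it cites \cite{RZ3} to obtain $(f_2,g_2)=(f_2,\pf\overline{A})$ (and only under the extra hypothesis $e_2<d_2$ does one get $g_2=\pf\overline{A}$ up to a unit). Your sketch---show $\pf\overline{A}\cdot p_j\in(f_1,g_1,f_2)$ for all $j$, hence $\pf\overline{A}\in J$, then match degrees in the cyclic module $J/(f_1,g_1,f_2)$---is plausible, but the passage from ``same degree'' to ``nonzero scalar multiple of $g_2$'' needs the extra fact that $\pf\overline{A}\notin(f_1,g_1,f_2)$, which you do not address.

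The more substantial difference is in establishing the \emph{equality} $I_{n+1}(M)=I_X$ after containment. The paper finishes with a pure degree computation: it reads off from $M$ the degrees of a generator--syzygy pair for $I_{n+1}(M)$, uses the Gorenstein identity $2\sum_i\pi_i=(n-1)(d_1+e_1+d_2-e_2)$, and checks directly that $\deg Y=d_1e_1+d_2e_2=\deg X$. You propose instead to assemble a resolution of $R/I_X$ by mapping cone on the sequence \eqref{vietoris} (using the linkage resolution of $R/J$ and the Koszul resolutions of the $R/I_{X_i}$), then recognise the last differential as $M^{T}$ and apply Hilbert--Burch. This would certainly suffice, but the identification of the explicit map with $M^{T}$ after the successive mapping cones and cancellations is delicate and is asserted without justification; the paper's degree count sidesteps this bookkeeping completely. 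Finally, in part (2) you should note (as the paper implicitly does) that $I_{X_1}+I_{X_2}$ coincides with the perfect height-$3$ ideal $I_Q=I_Z:I_G$, so that $X_1\cup X_2$ is aCM via \eqref{vietoris} and part (1) genuinely applies.
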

\begin{proof}
1) Let $I_{X_1}=(f_1,g_1)$ and $I_{X_2}=(f_2,g_2),$ with $g_2$ having minimal degree with respect the four forms. Consequently we can choose $f_1,g_1,f_2$ such that they form a regular sequence (this can be done since the codimension of $X_1\cap X_2$ is $3$). Denote by $I_Q=I_{X_1}+I_{X_2}$
and $I_Z=(f_1,g_1,f_2).$ Let $I_G:=I_Z:I_Q$ and note that since $I_Q$ is the ideal of an almost complete intersection and $I_Z$ is the ideal of a complete intersection contained in it, $I_G$ is the ideal of a Gorenstein scheme of codimension $3.$ By the structure theorem of Buchsbaum and Eisenbud (see \cite{BE}), there exists an alternating matrix $A$ of odd size $n,$ such that $I_G=\Pf_{n-1}(A).$ So $I_G=(p_1,\ldots,p_n),$ where $p_i=(-1)^i\pf A_{(i)}.$ Since $f_1,g_1,f_2\in I_G,$ we can write $f_1=\sum_{i=1}^n\alpha_ip_i,$ $g_1=\sum_{i=1}^n\beta_ip_i,$ $f_2=\sum_{i=1}^n\gamma_ip_i.$ Furthermore, in this setting, since $\min\{d_1,e_1\}>\min\{d_2,e_2\},$ using results in \cite{RZ3} we have that $(f_2,g_2)=(f_2,\pf\overline{A})$ (in particular if $e_2<d_2$ then $g_2=\pf\overline{A},$ up to a unit). Then we set
$$M:=\begin{pmatrix}0&\beta_1&\beta_2&\ldots&\beta_{n-1}&\beta_n\\
 0&\alpha_1&\alpha_2&\ldots&\alpha_{n-1}&\alpha_n\\
 \gamma_1\\
 \gamma_2\\
 \vdots& & & \text{{\huge A}}\\
 \gamma_{n-1}\\
 \gamma_n
 \end{pmatrix}.$$
We want to show that $I_X=I_{n+1}(M).$ At first we show that $$I_{n+1}(M)\subseteq I_X=I_{X_1}\cap I_{X_2}.$$
By Lemma \ref{genpf}, we get immediately that 
 $$\det M_{[1;-]}=f_1f_2,\,\,\det M_{[2;-]}=g_1f_2.$$
Let now $t$ be an integer, $3\le t\le n+2.$ We want to compute $\det M_{[t;-]}.$ To do that we apply Laplace rule with respect to the first row, the second row and the first column. So if we set
 $$\sigma_{ij}:=\left\{\begin{array}{lll} (-1)^{i+j}& \text{ if } & i<j \\(-1)^{i+j+1}& \text{ if } & i>j \end{array}\right.,\,
   \tau_{ij}:=\left\{\begin{array}{lll} (-1)^{i+1}& \text{ if } & i<j \\(-1)^{i}& \text{ if } & i>j \end{array}\right.$$
and $S_t:=\{(i,j,h)\mid 1\le i,j,h\le n,\,i\ne j,\,h\ne t-2\},$ $3\le t\le n+2,$
we obtain the following expansion
$$\det M_{[t,-]}=\sum_{S_t}\sigma_{ij}\tau_{h,t-2}\alpha_i\beta_j\gamma_h\det A_{[t-2,h;i,j]}.$$
Applying Lemma \ref{detpf} part 2), we have
\begin{multline*}
\det M_{[t,-]}=\\ \sum_{S_t}\sigma_{ij}\tau_{h,t-2}\alpha_i\beta_j\gamma_h\sgn(t-2,h)\sgn(i,j)(\sgn(t-2,h,i)\pf A_{(t-2,h,i)}\pf A_{(j)}- \\ \sgn(t-2,h,j)\pf A_{(t-2,h,j)}\pf A_{(i)})=\\
\sum_{S_t}(-1)^{i+h}\sgn(t-2,h,i)\alpha_i\beta_j\gamma_h\pf A_{(t-2,h,i)}p_j-\\ \sum_{S_t}(-1)^{j+h}\sgn(t-2,h,j)\alpha_i\beta_j\gamma_h\pf A_{(t-2,h,j)}p_i= \\
\sum_j\left(\sum_{i\ne j;h\ne t-2}(-1)^{i+h}\sgn(t-2,h,i)\alpha_i\gamma_h\pf A_{(t-2,h,i)}\right)\beta_jp_j-\\ \sum_i\left(\sum_{j\ne i;h\ne t-2}(-1)^{j+h}\sgn(t-2,h,j)\beta_j\gamma_h\pf A_{(t-2,h,j)}\right)\alpha_ip_i
\end{multline*} 
now we sum up and subtract the quantity
$$\sum_{k;h\ne t-2}(-1)^{k+h}\sgn(t-2,h,k)\pf A_{(t-2,h,k)}\alpha_k\gamma_h\beta_kp_k$$

so we get
\begin{multline*}
\sum_j\left(\sum_{i;h\ne t-2}(-1)^{i+h}\sgn(t-2,h,i)\alpha_i\gamma_h\pf A_{(t-2,h,i)}\right)\beta_jp_j+\\ -\sum_i\left(\sum_{j;h\ne t-2}(-1)^{j+h}\sgn(t-2,h,j)\beta_j\gamma_h\pf A_{(t-2,h,j)}\right)\alpha_ip_i;
\end{multline*}
if we set $$\lambda:=\sum_{i;h\ne t-2}(-1)^{i+h}\sgn(t-2,h,i)\alpha_i\gamma_h\pf A_{(t-2,h,i)}$$ and 
$$\mu:=\sum_{j;h\ne t-2}(-1)^{j+h}\sgn(t-2,h,j)\beta_j\gamma_h\pf A_{(t-2,h,j)}$$
we have
 $$\lambda\sum_j\beta_jp_j-\mu\sum_i\alpha_ip_i=\lambda g_1-\mu f_1\in (f_1,g_1)=I_{X_1}.$$



On the other hand

$$\det M_{[t,-]}=\sum_{S_t}\sigma_{ij}\tau_{h,t-2}\alpha_i\beta_j\gamma_h\det A_{[t-2,h;i,j]}.$$
Applying the second equality in Lemma \ref{detpf} part 2), we have
\begin{multline*}
\det M_{[t,-]}=\\ \sum_{S_t}\sigma_{ij}\tau_{h,t-2}\alpha_i\beta_j\gamma_h\sgn(h,t-2)\sgn(i,j)(\sgn(i,j,t-2)\pf A_{(i,j,t-2)}\pf A_{(h)}+ \\ -\sgn(i,j,h)\pf A_{(i,j,h)}\pf A_{(t-2)})=\\
\sum_{S_t}(-1)^{i+j+1}\sgn(i,j,t-2)\alpha_i\beta_j\gamma_h\pf A_{(i,j,t-2)}p_h-\\ \sum_{S_t}(-1)^{i+j+h+1}\sgn(i,j,h)\alpha_i\beta_j\gamma_h\pf A_{(i,j,h)}\pf A_{(t-2)}).
\end{multline*} 
Now we sum up and subtract the quantity
$$\sum_{i,j;i\ne j}(-1)^{i+j+1}\sgn(i,j,t-2)\alpha_i\gamma_j\beta_{t-2}\pf A_{(i,j,t-2)}p_{t-2}.$$
And we get
\begin{multline*}
\sum_h\left(\sum_{i,j;i\ne j}(-1)^{i+j+1}\sgn(i,j,t-2)\alpha_i\beta_j\pf A_{(i,j,t-2)}\right)\gamma_hp_h-\\ \pf A_{(t-2)}
\sum_{i,j,h;i\ne j}(-1)^{i+j+h+1}\sgn(i,j,h)\alpha_i\beta_j\gamma_h\pf A_{(i,j,h)}=\\
\left(\sum_{i,j;i\ne j}(-1)^{i+j+1}\sgn(i,j,t-2)\alpha_i\beta_j\pf A_{(i,j,t-2)}\right)\sum_h\gamma_hp_h-\pf A_{(t-2)}\pf\overline{A}=\\
\left(\sum_{i,j;i\ne j}(-1)^{i+j+1}\sgn(i,j,t-2)\alpha_i\beta_j\pf A_{(i,j,t-2)}\right)f_2-\pf A_{(t-2)}\pf\overline{A}\\ \in (f_2,\pf\overline{A})=I_{X_2}.
\end{multline*}
note that, in the last step, we computed $\pf\overline{A}$ by the Laplace rule with respect to the $3\times 3$ minors of the first three rows. So $I_{n+1}(M)$ is contained in $I_{X_1}\cap I_{X_2}.$
\par
Let now $Y$ be the scheme defined by the ideal $I_{n+1}(M).$ To complete the proof will be enough to show that $\deg Y=\deg X=\deg X_1+\deg X_2.$ Let $d_i:=\deg f_i$ and
$e_i:=\deg g_i.$ So we have to show that $\deg Y=d_1e_1+d_2e_2.$
\par
Now from $M$ we can deduce the degrees of a set of generators and of the corresponding syzygies of $I_Y.$ The degrees of generators are $d_1+d_2,e_1+d_2,e_2+\pi_1,\ldots,e_2+\pi_n,$ where $\pi_i=\deg p_i.$ The degrees of syzygies are $e_2+d_2,d_1+e_1+d_2-\pi_1,\ldots,d_1+e_1+d_2-\pi_n.$
It is well known that
 $$2\deg Y=(e_2+d_2)^2+\sum_{i=1}^n(d_1+e_1+d_2-\pi_i)^2-(d_1+d_2)^2-(e_1+d_2)^2-\sum_{i=1}^n(e_2+\pi_i)^2.$$
Since the $p_i$'s are minimal generators for $I_G=I_Z:I_Q,$ we have that $2\sum_{i=1}^n\pi_i=(n-1)(d_1+e_1+d_2-e_2),$ see for instance \cite{RZ1}. Now a straightforward computation shows that
 $$2\deg Y=2d_1e_1+2d_2e_2$$
and we are done.
\par
2) Let $I_G:=\Pf_{n-1}A.$ $I_G$ defines an aG scheme of codimension $3.$ Of course $I_G$ contains the complete intersection ideal $I_Z:=(f_1,g_1,f_2).$ Now we set $I_Q:=I_Z:I_G.$ Using the results in \cite{RZ3} $I_Q=(f_1,g_1,f_2,g_2),$ where $g_2:=\pf\overline{A}.$ Let $I_{X_1}:=(f_1,g_1)$ and $I_{X_2}:=(f_2,g_2).$ By the hypotheses $X_1$ and $X_2$ are complete intersection schemes. By the part 1) $I_{n+1}(M)=I_{X_1\cup X_2}.$

\end{proof}

\begin{rem}
The hypothesis $\min\{d_1,e_1\}>\min\{d_2,e_2\}$ is essential for our construction. Indeed if we consider $I_{X_1}:=(x^2,y^2)$ and $I_{X_2}:=(t^2,(x+y+t)^2)$ as ideals in $k[x,y,t],$ our construction produces the ideal $I_{X_1}\cap I_{X_2'}$ where $I_{X_2'}:=(t^2,xy+xt+yt)$ which is different from $I_{X_1}\cap I_{X_2},$ although $I_{X_1}+I_{X_2}=I_{X_1}+I_{X_2'}.$
\end{rem}

\begin{cor}
With the same hypotheses of the Theorem \ref{matr} the ideal $I_{X_1}\cap I_{X_2}$ admits the following free graded resolution (not necessarily minimal)
\begin{multline}\label{ris}
 0\to R(-(d_2+e_2))\oplus\bigoplus_{i=1}^nR(-(d_1+e_1+d_2-\pi_i))\stackrel{M}{\longrightarrow} \\ \to R(-(d_1+d_2))\oplus R(-(e_1+d_2))\oplus\bigoplus_{i=1}^nR(-(e_2+\pi_i))\to I_{X_1}\cap I_{X_2}\to 0.
\end{multline}
\end{cor}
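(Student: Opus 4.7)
The plan is to invoke the Hilbert--Burch theorem for the matrix $M$ constructed in Theorem \ref{matr}, and then to read off the graded twists directly from the row and column degrees of $M$.

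First, since $X_1\cup X_2$ is aCM of codimension $2$ in $\pp^r$, the ideal $I_X=I_{X_1}\cap I_{X_2}$ is perfect of grade $2$. Theorem \ref{matr} gives $I_X=I_{n+1}(M)$ for the $(n+2)\times(n+1)$ matrix $M$. Because the ideal of maximal minors has grade exactly $2$, the Hilbert--Burch theorem (in the converse direction) yields exactness of the complex
$$0\to R^{n+1}\xrightarrow{M} R^{n+2}\to I_X\to 0,$$
so $M$ itself is the differential of a free resolution of $I_X$.

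It then remains to identify the graded twists. For the rank $n+2$ free module, the maximal minors computed in the proof of Theorem \ref{matr} are $f_1f_2$ of degree $d_1+d_2$, $g_1f_2$ of degree $e_1+d_2$, and, for $t=3,\ldots,n+2$, the form $\det M_{[t;-]}$, which is shown there to equal an $R$-combination of $f_2$ and $\pf A_{(t-2)}\pf\overline A$; since $\deg\pf\overline A=e_2$ and $\pi_{t-2}:=\deg\pf A_{(t-2)}$, each such minor has degree $e_2+\pi_{t-2}$, producing the target module of (\ref{ris}). For the rank $n+1$ free module, the equalities $\deg\alpha_j=d_1-\pi_j$, $\deg\beta_j=e_1-\pi_j$, $\deg\gamma_j=d_2-\pi_j$, read off from $f_1=\sum\alpha_ip_i$, $g_1=\sum\beta_ip_i$, $f_2=\sum\gamma_ip_i$, force the first column of $M$ to contribute $R(-(d_2+e_2))$ (matching $\gamma_j$ of degree $d_2-\pi_j$ against the target generator of degree $e_2+\pi_j$) and each subsequent column to contribute $R(-(d_1+e_1+d_2-\pi_j))$ (matching $\alpha_j$ or $\beta_j$ against $f_1f_2$ or $g_1f_2$), giving the source module of (\ref{ris}).

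The one point that could be regarded as an obstacle is exactness of the complex, but this is automatic from the aCM hypothesis on $X$ via Hilbert--Burch. Note that (\ref{ris}) is not claimed to be minimal, so no cancellation analysis between source and target twists is required.
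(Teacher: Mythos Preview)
Your proof is correct and follows essentially the same approach as the paper, whose proof is the single line ``The result follows by the degree matrix of $M$.'' You have simply made explicit what the paper leaves implicit: the exactness via Hilbert--Burch (using that $I_{n+1}(M)=I_X$ has grade $2$) and the computation of the row and column degrees of $M$ from $\deg\alpha_j=d_1-\pi_j$, $\deg\beta_j=e_1-\pi_j$, $\deg\gamma_j=d_2-\pi_j$.
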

\begin{proof}
The result follows by the degree matrix of $M.$
\end{proof}
The aim of the next section is to apply the previous results to get information on Hilbert functions and graded Betti numbers of $I_{X_1}\cap I_{X_2}.$

\section{Applications to Hilbert functions and graded Betti numbers} 
\markboth{\it Applications to Hilbert functions and graded Betti numbers}
{\it Applications to Hilbert functions and graded Betti numbers}
From the Theorem \ref{matr} we are able to describe all possible Hilbert functions for aCM schemes which are union of two complete intersection schemes of codimension $2$ without common components.
\par 
In the sequel we will set $(a)_+:=\max\{0,a\}.$
\begin{prp}
Let $X_1,X_2\subset\pp^r,$ $r\ge 2,$ be two complete intersection schemes of codimension $2,$ without common components, of type, respectively, $(d_1,e_1)$ and $(d_2,e_2),$ with $e_2=\min\{d_1,e_1,d_2,e_2\}.$
Then the Hilbert function of the Artinian reduction $A$ of $I_{X_1}\cap I_{X_2}$ is
\begin{multline}\label{hf}
H_A(t)=t+1-\sum_{i=1}^n(t+1-e_2-\pi_i)_{\phantom{}_+} - (t+1-d_1-d_2)_{\phantom{}_+} - (t+1-e_1-d_2)_{\phantom{}_+} +\\ \sum_{i=1}^n(t+1-d_1-e_1-d_2+\pi_i)_{\phantom{}_+} + (t+1-d_2-e_2)_{\phantom{}_+}
\end{multline}
where the $\pi_i$'s are the minimal generators degrees of an aG scheme linked to $X_1\cap X_2$ in a complete intersection of type $(e_1,d_1,d_2).$
\end{prp}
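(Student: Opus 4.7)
\emph{Plan.} The strategy is computational: read the Hilbert series of $R/(I_{X_1}\cap I_{X_2})$ off the free graded resolution \eqref{ris} provided by the preceding corollary, then pass to an Artinian reduction and expand. Since $e_2$ is the overall minimum among $d_1,e_1,d_2,e_2$, Theorem \ref{matr} applies with $g_2$ taken to have degree $e_2$, so we may use the resolution \eqref{ris}. Here the exponents $\pi_i=\deg p_i$ are the degrees of the minimal generators of the Gorenstein ideal $I_G$ directly linked to $I_{X_1\cap X_2}$ inside the complete intersection $I_Z=(f_1,g_1,f_2)$ of type $(d_1,e_1,d_2)$, exactly as in the statement.

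Next, taking the alternating sum of graded ranks in \eqref{ris} yields
\begin{equation*}
H_{R/(I_{X_1}\cap I_{X_2})}(z) \;=\; \frac{Q(z)}{(1-z)^{r+1}},
\end{equation*}
with
\begin{equation*}
Q(z) \;=\; 1 - z^{d_1+d_2} - z^{e_1+d_2} - \sum_{i=1}^{n} z^{e_2+\pi_i} + z^{d_2+e_2} + \sum_{i=1}^{n} z^{d_1+e_1+d_2-\pi_i}.
\end{equation*}
Although \eqref{ris} need not be minimal, any non-minimality comes from split summands $R(-a)\to R(-a)$ that cancel in pairs and therefore do not affect $Q(z)$. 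Because $X$ is aCM of codimension $2$, the ring $R/(I_{X_1}\cap I_{X_2})$ is Cohen--Macaulay of Krull dimension $r-1$; since $k$ is infinite, an Artinian reduction $A$ by a regular sequence of $r-1$ general linear forms exists, and its Hilbert series is
\begin{equation*}
H_A(z) \;=\; (1-z)^{r-1}\, H_{R/(I_{X_1}\cap I_{X_2})}(z) \;=\; \frac{Q(z)}{(1-z)^{2}}.
\end{equation*}

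Finally, from the expansion $1/(1-z)^{2}=\sum_{k\ge 0}(k+1)z^{k}$ the coefficient of $z^t$ in $z^{d}/(1-z)^{2}$ equals $(t-d+1)_{+}$. Applying this identity term-by-term to each of the six summands of $Q(z)$ produces precisely the formula \eqref{hf}. No genuine obstacle is expected: the argument is essentially bookkeeping on an already-constructed resolution, and the only subtle point to flag is the remark that non-minimality of \eqref{ris} leaves the Hilbert series, and therefore $H_A$, untouched.
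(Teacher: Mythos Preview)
Your computation of the Hilbert series from the resolution \eqref{ris} is correct and is exactly the mechanism the paper has in mind. However, there is a genuine gap in your invocation of Theorem \ref{matr}. That theorem (and hence the corollary yielding \eqref{ris}) requires the \emph{strict} inequality $\min\{d_1,e_1\}>\min\{d_2,e_2\}$, whereas the hypothesis of the proposition only says $e_2=\min\{d_1,e_1,d_2,e_2\}$. When $\min\{d_1,e_1\}=e_2$ the construction of Theorem \ref{matr} need not return $I_{X_1}\cap I_{X_2}$ at all; the remark immediately following that theorem gives an explicit example where the output is $I_{X_1}\cap I_{X_2'}$ for a different complete intersection $X_2'$ of the same type. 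So you cannot simply assert that \eqref{ris} resolves $I_{X_1}\cap I_{X_2}$ in this boundary case.

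The paper closes this gap as follows: in the equality case the construction still produces a scheme $X_1\cup X_2'$ with $X_2'$ a complete intersection of type $(d_2,e_2)$ and, crucially, $I_{X_1}+I_{X_2'}=I_{X_1}+I_{X_2}$. Feeding this into the exact sequence \eqref{vietoris} for both pairs shows that $I_{X_1}\cap I_{X_2}$ and $I_{X_1}\cap I_{X_2'}$ have the same graded Betti numbers up to cancellation, hence the same Hilbert function. Your Hilbert-series bookkeeping then finishes the job. You should add this case distinction; without it the argument is incomplete.
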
 
\begin{proof}
If $\min\{d_1,e_1\}>\min\{d_2,e_2\}$ then this result follows immediately by Corollary \ref{ris}. Otherwise the construction of Theorem \ref{matr} produces the scheme $X_1\cup X_2',$ (where $X_2'$ is a complete intersection of the same type of $X_2$) such that $I_{X_1}+I_{X_2'}=I_{X_1}+I_{X_2}.$ So, using sequence (\ref{vietoris}), we get that the resolution of $I_{X_1}\cap I_{X_2}$ is up to cancellation the same as the resolution of $I_{X_1}\cap I_{X_2'}$ from which we get the assertion.
\end{proof}

\begin{rem}
By the the formula (\ref{hf}) we deduce some facts about the ideal $I_{X_1}\cap I_{X_2}.$ For this we use the following setting $d_1\ge e_1$ and $\pi_i\le\pi_{i+1}$ for every $i.$ 
\begin{itemize}
	\item[1)]The degree of the first generator is $e_2+\pi_1;$ indeed, since $I_Z\subseteq I_G,$ we have in particular that $\pi_1\le e_1\le d_1$ so $e_2+\pi_1\le e_1+d_2\le d_1+d_2.$ 
	\item[2)]The degree of the second generator is $e_2+\pi_2;$ indeed, by the previous observation $\pi_2\le\max\{e_1,d_2\},$ so $e_2+\pi_2\le e_1+d_2\le d_1+d_2.$
	\item[3)]Let $\sigma:=\max\{t\mid H_A(t)>0\}$ (the socle degree of the Artinian algebra $A$). Note that, since $I_Z\subseteq I_G,$ if we arrange $(e_1,d_1,d_2)$ in a not decreasing way, we have that they are respectively greater than or equal to $\pi_1\le\pi_2\le\pi_3.$ In any case $\sigma\le e_1+d_1+d_2-\pi_1-2,$ since the biggest degree of a minimal syzygy of $A$ is less than or equal to $e_1+d_1+d_2-\pi_1$ because $d_2+e_2\le e_1+d_1+d_2-\pi_1$ (as $e_1\ge\pi_1$). This bound is sharp iff $e_1>\pi_1.$ Whenever $e_1=\pi_1,$ $\sigma\le e_1+d_1+d_2-\pi_2-2,$ since, in this case, the biggest degree of a minimal syzygy of $A$ is
less than or equal to $e_1+d_1+d_2-\pi_2,$ because $d_2+e_2\le e_1+d_1+d_2-\pi_2$ (as $d_1\ge\pi_2$). This bound is sharp iff $e_1=\pi_1$ and
$d_1>\pi_2.$ Whenever $e_1=\pi_1$ and $d_1=\pi_2,$ $\sigma\le\max\{e_1+d_1+d_2-\pi_3-2,d_2+e_2-2\};$ moreover if $\max\{e_1+d_1+d_2-\pi_3-2,d_2+e_2-2\}=e_1+d_1+d_2-\pi_3-2$ then $\sigma=e_1+d_1+d_2-\pi_3-2.$
  \item[4)]Although the complete intersections have the Hilbert function of decreasing type, this is not true anymore, in general, for the unions of two of them, as we will see in the Proposition \ref{gen}.
\end{itemize}
\end{rem}
\begin{prp}\label{gen}
Let $X_1,X_2\subset\pp^r,$ $r\ge 2,$ be two complete intersection schemes of codimension $2$ of type, respectively, $(d_1,e_1)$ and $(d_2,e_2).$ Assume that $X_1\cup X_2$ is aCM. Let $I_{X_1}=(f_1,g_1)$ and $I_{X_2}=(f_2,g_2),$ $\deg f_i=d_i$ and $\deg g_i=e_i.$ Let us suppose that $f_2\in(f_1,g_1,g_2),$ say $f_2=a_1f_1+b_1g_1+b_2g_2.$ Then the Hilbert-Burch matrix of $I_{X_1}\cap I_{X_2}$ is
 $$M=\begin{pmatrix}-g_2&0\\b_1&f_1\\a_1&-g_1\end{pmatrix}.$$
Consequently $I_{X_1}\cap I_{X_2}=(f_2-b_2g_2,g_1g_2,f_1g_2).$
\par
The graded minimal free resolution of $I_{X_1}\cap I_{X_2}$ is
 $$0\to R(-(d_2+e_2)\oplus R(-(e_1+e_2+d_1)\to R(-d_2)\oplus R(-(e_1+e_2))\oplus R(-(d_1+e_2)).$$
\par
The Hilbert function of the Artinian reduction $A$ of $I_{X_1}\cap I_{X_2}$ is
 $$H_{A}(t)=H_{A_1}(t-e_2)+H_{A_2}(t),$$
where $A_i$ is the Artinian reduction of $I_{X_i}.$
\end{prp}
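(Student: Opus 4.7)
The strategy is to give an explicit set of generators for $I_{X_1}\cap I_{X_2}$, to exhibit the two columns of $M$ as a full set of syzygies on them, and to invoke the Hilbert--Burch theorem. The Hilbert function formula will then follow by unwinding the Betti numbers of the resolution.

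First I would exploit the hypothesis to get a regular sequence. Since $X_1\cup X_2$ is aCM, the Remark at the beginning of the section says that $I_{X_1}+I_{X_2}$ has codimension $3$. But $f_2=a_1f_1+b_1g_1+b_2g_2$ forces $I_{X_1}+I_{X_2}=(f_1,g_1,f_2,g_2)=(f_1,g_1,g_2)$, so $f_1,g_1,g_2$ is a regular sequence; in particular $g_2$ is a non-zerodivisor on $R/I_{X_1}$. Setting $\tilde f_2:=f_2-b_2g_2=a_1f_1+b_1g_1$, one sees that $\tilde f_2\in I_{X_1}\cap I_{X_2}$ and $I_{X_2}=(\tilde f_2,g_2)$. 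Given $h\in I_{X_1}\cap I_{X_2}$, write $h=c\tilde f_2+dg_2$; then $dg_2\in I_{X_1}$, so the regularity of $g_2$ gives $d\in(f_1,g_1)$, and hence $h\in(\tilde f_2,f_1g_2,g_1g_2)$. The reverse inclusion being obvious, we obtain
$$I_{X_1}\cap I_{X_2}=(f_2-b_2g_2,\,f_1g_2,\,g_1g_2).$$

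Next I would compute the $2\times 2$ minors of $M$ and observe that up to sign they are precisely the three generators just found, while a direct multiplication verifies that the two columns of $M$ annihilate the corresponding row of generators. Since $I_{X_1}\cap I_{X_2}=I_{X_1\cup X_2}$ has codimension $2$, the converse of the Hilbert--Burch theorem applies and yields the resolution
$$0\to R^2\xrightarrow{M}R^3\to I_{X_1}\cap I_{X_2}\to 0.$$
Tracking degrees of the generators ($d_2$ for $\tilde f_2$, $d_1+e_2$ for $f_1g_2$, $e_1+e_2$ for $g_1g_2$) together with the homogeneity of the entries of $M$ forces the column shifts $d_2+e_2$ and $d_1+e_1+e_2$, giving the claimed graded resolution.

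For the Hilbert function, reduce modulo a regular sequence of $r-1$ general linear forms (such a sequence exists on $R/I_{X_1\cup X_2}$ by aCM-ness) and apply the standard alternating-sum formula to the resolution, obtaining
$$H_A(t)=(t+1)_+-(t-d_2+1)_+-(t-d_1-e_2+1)_+-(t-e_1-e_2+1)_++(t-d_2-e_2+1)_++(t-d_1-e_1-e_2+1)_+.$$
Writing down $H_{A_1}(t-e_2)$ and $H_{A_2}(t)$ from the Koszul resolutions of the two complete intersections and adding them produces exactly the same expression after the cancellation of the two $(t-e_2+1)_+$ terms, which gives the desired identity. The only genuinely nontrivial step is the initial regular-sequence argument that reduces the computation of the intersection to a transparent form; once $g_2$ is known to be a non-zerodivisor on $R/I_{X_1}$, the rest is formal.
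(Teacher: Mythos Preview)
Your proof is correct and arrives at the same conclusion as the paper's, but the key equality $I_2(M)=I_{X_1}\cap I_{X_2}$ is established differently. The paper argues height~$2$ for $I_2(M)$ by excluding a common factor among the three minors, notes the trivial containment $I_2(M)\subseteq I_{X_1}\cap I_{X_2}$, and then closes the gap by a degree count, $\deg I_2(M)=d_1e_1+d_2e_2=\deg(I_{X_1}\cap I_{X_2})$. You instead extract from the codimension~$3$ property of $I_{X_1}+I_{X_2}=(f_1,g_1,g_2)$ that $g_2$ is a non-zerodivisor on $R/I_{X_1}$ and compute the intersection directly. Your route is slightly more self-contained (no numerical degree formula is needed) and it makes explicit the regular sequence $(f_1,g_1,g_2)$, which the paper only invokes later in the Corollary; the paper's route, on the other hand, parallels the degree argument used in the proof of Theorem~\ref{matr}. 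The Hilbert function verification is identical in both proofs. One small redundancy: the check that the columns of $M$ annihilate the row of signed minors is automatic from Laplace expansion, so you could omit it.
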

\begin{proof}
Let $I_Y=I_2(M).$ Note that $\hgt I_Y=\hgt(f_2-b_2g_2,g_1g_2,f_1g_2)=2;$ namely, if $\hgt I_Y=1$ the three generators should have a common factor. Since $(f_1,g_1)$ is a regular sequence $f_2$ should have a common factor with $g_2,$ a contradiction.
\par
Trivially $I_{Y}\subseteq I_{X_1}\cap I_{X_2};$ on the other hand an easy computation shows that $\deg I_Y=d_1e_1+d_2e_2=\deg I_{X_1}\cap I_{X_2},$ so $I_Y=I_{X_1}\cap I_{X_2}.$
\par
A minimal set of generators of $I_{X_1}\cap I_{X_2}$ and its resolution can be deduced immediately from the matrix $M.$
\par
With regard to the Hilbert function, we have, for every $t\in\zz,$ that
\begin{multline*}
H_A(t)=(t+1)_{\phantom{}_+}-(t+1-d_2)_{\phantom{}_+} - (t+1-e_1-e_2)_{\phantom{}_+} - (t+1-d_1-e_2)_{\phantom{}_+} +\\ +(t+1-d_2-e_2)_{\phantom{}_+} + (t+1-e_1-e_2-d_1)_{\phantom{}_+};
\end{multline*}
\begin{multline*}
H_{A_1}(t-e_2)=(t+1-e_2)_{\phantom{}_+}-(t+1-e_2-d_1)_{\phantom{}_+}+\\ - (t+1-e_2-e_1)_{\phantom{}_+}+ (t+1-e_2-d_1-e_1)_{\phantom{}_+};
\end{multline*}
\begin{multline*}
H_{A_2}(t)=(t+1){\phantom{}_+}-(t+1-d_2)_{\phantom{}_+} - (t+1-e_2)_{\phantom{}_+}+ (t+1-d_2-e_2)_{\phantom{}_+},
\end{multline*}
from which we get our formula.
\end{proof}

\begin{cor}
Let $X_1,X_2\subset\pp^r,$ $r\ge 2,$ be two complete intersection schemes of codimension $2$ of type, respectively, $(d_1,e_1)$ and $(d_2,e_2).$ Assume that $X_1\cup X_2$ is aCM. Let $I_{X_1}=(f_1,g_1)$ and $I_{X_2}=(f_2,g_2),$ $\deg f_i=d_i$ and $\deg g_i=e_i.$ Let us suppose that $d_2\ge d_1+e_1+e_2-2$ and $(f_1,g_1,g_2)$ is a regular sequence. Then the same conclusions of Proposition \ref{gen} hold.
\par
In particular when $d_2>d_1+e_1+e_2$ then $H_{A}$ is not of decreasing type, where $A$ is the Artinian reduction of $I_{X_1}\cap I_{X_2}.$
\end{cor}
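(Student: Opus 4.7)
The plan is to reduce to Proposition~\ref{gen} by showing that the hypotheses force $f_2\in J:=(f_1,g_1,g_2)$, and then to read the non-decreasing-type assertion off directly from the Hilbert function formula there.

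Write $I_Q:=I_{X_1}+I_{X_2}=J+(f_2)$. Since $X_1\cup X_2$ is aCM, the remark at the beginning of Section~1 gives that $I_Q$ is aCM of codimension $3$, so $R/I_Q$ is Cohen--Macaulay of dimension $r-2$. The regular sequence hypothesis makes $J$ a complete intersection of the same codimension, with $R/J$ likewise Cohen--Macaulay of dimension $r-2$. The short exact sequence
$$0\to I_Q/J\to R/J\to R/I_Q\to 0$$
together with the depth lemma forces $M:=I_Q/J$ to be either zero or Cohen--Macaulay of dimension $r-2$. Suppose for contradiction that $f_2\notin J$; then $M$ is nonzero, cyclic, and generated by $\bar f_2$ in degree $d_2$. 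Choosing $r-2$ generic linear forms $L_1,\dots,L_{r-2}$ simultaneously regular on $M$, $R/J$ and $R/I_Q$ (possible because all three are Cohen--Macaulay of the same dimension) keeps the sequence exact after quotienting. The Artinian reduction $\overline{R/J}$ is an Artinian complete intersection of type $(d_1,e_1,e_2)$ with socle degree $d_1+e_1+e_2-3$, and $\bar M\hookrightarrow \overline{R/J}$ is a nonzero cyclic submodule whose minimal generator sits in degree $d_2$. This forces $d_2\le d_1+e_1+e_2-3$, contradicting $d_2\ge d_1+e_1+e_2-2$. Hence $f_2\in J$, and Proposition~\ref{gen} delivers the Hilbert--Burch matrix, the minimal free resolution, and the Hilbert function formula $H_A(t)=H_{A_1}(t-e_2)+H_{A_2}(t)$.

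For the last assertion, evaluate this formula when $d_2>d_1+e_1+e_2$. At $t=d_1+e_1+e_2-2$ the shifted term $H_{A_1}(t-e_2)$ equals the socle value $1$, while $H_{A_2}(t)=e_2$ is still in the plateau region of $A_2$ (as $t\le d_2-2$); so $H_A(t)=e_2+1$. At $t=d_1+e_1+e_2-1$ and $t=d_1+e_1+e_2$ the $A_1$-term has dropped to $0$, while $H_{A_2}$ still equals $e_2$ (since $d_2-1\ge d_1+e_1+e_2$). Thus $H_A$ exhibits the pattern $\ldots,\,e_2+1,\,e_2,\,e_2,\,\ldots$, a strict decrease immediately followed by a plateau, preventing $H_A$ from being of decreasing type.

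The main obstacle is the first step. For $r\ge 3$ the quotient $R/J$ is not Artinian, and the naive socle-degree argument only yields $f_2\in J+(L_1,\ldots,L_{r-2})$ for generic $L_i$, which is strictly weaker than $f_2\in J$. The Cohen--Macaulayness of $R/I_Q$ coming from the aCM hypothesis on $X_1\cup X_2$ is precisely what upgrades this: it makes $I_Q/J$ itself Cohen--Macaulay of dimension $r-2$ so that it embeds into $R/J$ even after passage to a common Artinian reduction, and the sharp socle bound then gives the desired inequality on $d_2$.
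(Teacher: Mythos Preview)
Your proof is correct and follows essentially the same line as the paper: show $f_2\in(f_1,g_1,g_2)$ using the degree bound and the socle degree of the Artinian complete intersection, then invoke Proposition~\ref{gen}. The paper's argument is terser---it simply says that the Artinian reduction $B$ of $(f_1,g_1,g_2)$ satisfies $B_t=0$ for $t\ge d_1+e_1+e_2-2$ and concludes $f_2\in(f_1,g_1,g_2)$---whereas you explicitly justify why this inference is legitimate for $r\ge3$ by using the Cohen--Macaulayness of $I_Q/J$ to pass to a common Artinian reduction; this is exactly the point the paper is tacitly assuming. For the non-decreasing-type assertion, both you and the paper exhibit a plateau of value $e_2$ (the paper on the range $d_1+e_1+e_2-1\le t\le d_2-1$, you via the explicit pattern $e_2+1,\,e_2,\,e_2$), which amounts to the same observation.
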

\begin{proof}
Our assumptions imply that $I_{X_1}+I_{X_2}$ is an aCM ideal of height $3.$ Let $B$ be the Artinian reduction of the complete intersection ideal $(f_1,g_1,g_2).$ Then $B_t=0$ for $t\ge d_1+e_1+e_2-2,$ so, since $d_2\ge d_1+e_1+e_2-2,$ $f_2\in(f_1,g_1,g_2);$ now applying Proposition \ref{gen} we get our assertion.
\par
If $d_2>d_1+e_1+e_2$ then $H_A(t)=H_{A_1}(t-e_2)+H_{A_2}(t)=e_2<e_1+e_2$ for every $t$ such that $d_1+e_1+e_2-1\le t\le e_2-1,$ i.e. $H_A$ takes the same value (less than $e_1+e_2$) in at least two adjacent degrees.
\end{proof}
In the next proposition we collect results on the graded Betti numbers of our schemes which are consequences of Theorem \ref{matr}. 
\begin{prp}\label{betti}
With the same assumptions of Theorem \ref{matr} we have
\begin{itemize}
	\item[i.]The graded Betti numbers can be obtained by the resolution (\ref{ris}) just deleting at most three terms in degrees $d_1+d_2,$ $e_1+d_2,$ $e_2+d_2.$
	\item[ii.]In any case two among the products $f_1f_2,$ $f_1g_2,$ $g_1f_2,$ $g_1g_2$ are minimal generators for $I_{X_1}\cap I_{X_2}.$
\end{itemize}
\end{prp}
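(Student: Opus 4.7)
I would prove Proposition \ref{betti} by analyzing the minimization of the resolution (\ref{ris}). A pair of trivial summands $R(-a)\to R(-a)$ cancels in the minimal resolution iff the corresponding entry of the matrix $M$ is a non-zero constant, so the strategy is to inspect the entries of $M$ position by position.

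For (i), I classify the entries into four blocks. The positions $(1,1)$ and $(2,1)$ are zero by construction and contribute nothing. The entries $\gamma_i$ in column $1$ (rows $3, \ldots, n+2$) have degree $d_2-\pi_i$ and are non-zero constants precisely when $\pi_i=d_2$; any such cancellation involves the unique syzygy $R(-(d_2+e_2))$ coming from column $1$ and therefore lies in the labeled degree $e_2+d_2$. Likewise $\beta_j$ and $\alpha_j$ in rows $1, 2$ have degrees $e_1-\pi_j$ and $d_1-\pi_j$ and give cancellations in labeled degrees $d_1+d_2$ and $e_1+d_2$ respectively. The remaining entries are those of $A$: the entry $a_{ij}$ has degree $d_1+e_1+d_2-e_2-\pi_i-\pi_j$, which could vanish only when $\pi_i+\pi_j$ equals the socle shift $\sigma_G=d_1+e_1+d_2-e_2$ of $R/I_G$; but $A$ is a minimal Buchsbaum--Eisenbud matrix, so all its entries lie in the irrelevant maximal ideal of $R$ and none is ever a non-zero constant. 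Finally, each of the three channels --- row $1$, row $2$, column $1$ --- contains a unique source summand (one generator of degree $d_1+d_2$, one of degree $e_1+d_2$, and one syzygy of degree $d_2+e_2$), so at most one cancellation arises in each of the three labeled degrees, totaling at most three.

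For (ii), the products $f_1f_2=\det M_{[1;-]}$ and $g_1f_2=\det M_{[2;-]}$ are already among the generators produced by $M$. By (i), each is a minimal generator of $I_{X_1}\cap I_{X_2}$ unless the cancellation in its own degree occurs. If both survive, they supply the two required minimal generators. Otherwise I argue case by case. If $f_1f_2$ is cancelled then $\pi_i=e_1$ for some $i$, and comparing degrees forces $p_i$ to be a non-zero scalar multiple of $g_1$. A Laplace expansion of $\det M_{[i+2;-]}$ analogous to the computation in the proof of Theorem \ref{matr} (using the second equality in Lemma \ref{detpf} part 2)) yields a form whose leading contribution in degree $e_2+\pi_i=e_1+e_2$ is a non-zero scalar multiple of $g_1\cdot\pf\overline{A}=g_1g_2$, exhibiting $g_1g_2$ as a minimal generator. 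The symmetric argument with $\alpha_j$ in place of $\beta_j$ shows that if $g_1f_2$ is cancelled then $\pi_j=d_1$ for some $j$ and $f_1g_2$ is a minimal generator. In the doubly cancelled case one must have $i\neq j$ (else $f_1, g_1$ would be proportional, contradicting regularity of the sequence $f_1, g_1, f_2$), and the two identifications together produce both $f_1g_2$ and $g_1g_2$ as minimal generators.

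The principal obstacle is the explicit identification in (ii): showing that when $\pi_i=e_1$ the minor $\det M_{[i+2;-]}$ reduces, modulo summands that cancel, to a non-zero scalar multiple of $g_1g_2$. This requires a pfaffian manipulation parallel to the one establishing $I_{n+1}(M)\subseteq I_{X_2}$ in Theorem \ref{matr}, but anchored on the bordered matrix $\overline{A}$ in order to produce the factor $g_2=\pf\overline{A}$ instead of $f_2$.
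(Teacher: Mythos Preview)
Your approach is essentially the same as the paper's: for (i) you both argue that the entries of $A$ lie in the maximal ideal (since $A$ is a minimal Buchsbaum--Eisenbud matrix) so that units can only occur in the first two rows or first column; for (ii) you both start with $f_1f_2,g_1f_2$ and, when one of them is cancelled, appeal to the pfaffian expansion of $\det M_{[i+2;-]}$ from Theorem~\ref{matr} to extract $g_1g_2$ (resp.\ $f_1g_2$). Your treatment of (i) is in fact more detailed than the paper's one-line observation.

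There is, however, a small slip in (ii). When $f_1f_2$ is cancelled you know that some $\beta_i$ is a non-zero constant, hence $\pi_i=e_1$; but \emph{equality of degrees does not force $p_i$ to be a scalar multiple of $g_1$}. What is true is that, since $g_1=\sum_j\beta_jp_j$ with $\beta_i$ invertible, $g_1$ can replace $p_i$ in a minimal generating set of $I_G$: this is exactly the basis change the paper performs. After that change the formula $\det M_{[i+2;-]}=(\text{something})f_2\pm\pf A_{(i)}\,\pf\overline{A}$ has $\pf A_{(i)}=\pm g_1$, and your argument goes through. A second minor point: $\pf\overline{A}$ need not equal $g_2$ on the nose---one only has $(f_2,g_2)=(f_2,\pf\overline{A})$---so the term $g_1\pf\overline{A}$ equals $g_1g_2$ only modulo a multiple of $g_1f_2$; the paper absorbs this into the replacement. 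With these two corrections your proof aligns with the paper's.
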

\begin{proof}
 \item[i.]It is enough to observe that the only units in the matrix $M$ can appear in the first two rows or in the first column.
 \item[ii.]If the resolution (\ref{ris}) is minimal, then $f_1f_2$ and $g_1f_2$ are the first two maximal minors of the matrix $M.$ 
Otherwise let us suppose that, say $f_1f_2$ is not a minimal generator for $I_{X_1}\cap I_{X_2}.$ This implies that $e_1=\pi_i$ for some $i$ and $g_1$ is a minimal generators for $I_G,$ so we can replace $p_i$ with $g_1.$ Note that $\deg\det M_{[i+2;-]}=\pi_i+e_2=e_1+e_2,$ so $\det M_{[i+2;-]}$ can be choose as a minimal generator for $I_{X_1}\cap I_{X_2}.$ Now $\det M_{[i+2;-]}=q+g_1\pf\overline{A}$ (see the computation in the proof of Theorem \ref{matr}). But $q+g_1\pf\overline{A}= q+g_1g_2+\lambda f_2g_1$ for some $q$ and $\lambda,$ then $g_1g_2$ can replace it as a minimal generator for $I_{X_1}\cap I_{X_2}.$ Analogously when $g_1f_2$ is not a minimal generator for our ideal the same argument shows that we can take $f_1g_2$ as a minimal generator.
\end{proof}

\begin{exm}
We produce an example in which the resolution (\ref{ris}) is a minimal free resolution for $I_{X_1}\cap I_{X_2}.$ In $R=k[x_0,..x_8],$ let us consider the following two complete intersections $I_{X_1}=(f_1,g_1)$ e $I_{X_2}=(f_2,g_2)$ where
 $$f_1=x_0^3x_1^3x_7,\,\,\,g_1=x_2^3x_5^3x_6$$
$$f_2=(x_0^3+x_2^3+x_4^3)x_3^3x_8+(x_0^3x_7-x_5^3x_8)x_1^3,\,\,\,g_2=(x_0^3+x_2^3+x_4^3)x_6x_7x_8.$$
Let $I_G:=(f_1,g_1,f_2):(f_1,g_1,f_2,g_2);$ $I_G$ is a Gorenstein ideal with $5$ generators in degree $6.$ Consequently the resolution (\ref{ris}) is
$$0\to R(-13)\oplus R(-15)^5\to R(-12)^5\oplus R(-14)^2\to I_{X_1}\cap I_{X_2}\to 0,$$
which is clearly minimal.
\end{exm}

\begin{rem}
Note that concerning to the cancellations we describe in Proposition \ref{betti} all the possibilities could happen. Indeed, it will be enough to choose for a suitable Gorenstein $G$ a complete intersection containing it and whose generators are or are not minimal generators for $G.$
\end{rem}

\begin{rem}
Note that in the resolution \ref{ris} a syzygy of degree $d_2+e_2$ appears. It induces via the map in the exact sequence \ref{vietoris} the trivial syzygy on $I_{X_2}.$ This implies that we will have a cancellation in degree $d_2+e_2$ in the mapping cone in the exact sequence \ref{vietoris}. In fact we have $I_{X_1}\cap I_{X_2}=(f_1f_2,g_1f_2,h_1,\ldots,h_n),$  where $h_i:=\det M_{[i+2;-]}$ (see the proof of Theorem \ref{matr}). The syzygy of degree $d_2+e_2,$ in the same notation of Theorem \ref{matr}, is $(0,0,\gamma_1,\ldots,\gamma_n).$ From the proof of Theorem \ref{matr} we have that $h_i=\lambda_if_2+p_i\pf\overline{A},$ so we get
 $$\sum_{i=1}^n\gamma_i(\lambda_if_2+p_i\pf\overline{A})=0\rw
 \sum_{i=1}^n\gamma_i\lambda_if_2+\sum_{i=1}^n\gamma_ip_i\pf\overline{A}=0,$$
since $\sum_{i=1}^n\gamma_ip_i=f_2$ we are done.
\end{rem}

\vspace{1cm}
{\f
{\sc (A. Ragusa) Dip. di Matematica e Informatica, Universit\`a di Catania,\\
                  Viale A. Doria 6, 95125 Catania, Italy}\par
{\it E-mail address: }{\tt ragusa@dmi.unict.it} \par
{\it Fax number: }{\f +39095330094} \par
\vspace{.3cm}
{\sc (G. Zappal\`a) Dip. di Matematica e Informatica, Universit\`a di Catania,\\
                  Viale A. Doria 6, 95125 Catania, Italy}\par
{\it E-mail address: }{\tt zappalag@dmi.unict.it} \par
{\it Fax number: }{\f +39095330094}
}

\end{document}